\DeclareMathOperator{\conv}{conv}
\DeclareMathOperator{\vol}{vol}
\DeclareMathOperator{\lcm}{lcm}
\theoremstyle{plain}
\newtheorem{theorem}{Theorem}
\newtheorem{lemma}[theorem]{Lemma}
\theoremstyle{definition}
\numberwithin{theorem}{section}
\let\c@equation\c@theorem              
\let\c@figure\c@theorem              
\providecommand{\floor}[1]{\left\lfloor#1\right\rfloor}
\newcommand{\R}{\mathbb{R}}
\newcommand{\Z}{\mathbb{Z}}
\newcommand{\Q}{\mathbb{Q}}
\long\def\symbolfootnote[#1]#2{\begingroup%
\def\thefootnote{\fnsymbol{footnote}}\footnote[#1]{#2}\endgroup}
\renewcommand{\mod}[1]{\,(\text{mod }#1)}
\providecommand{\D}{\mathcal{D}}
\renewcommand{\P}{\mathcal{P}}
\providecommand{\QQ}{\mathcal{Q}}
\title{A finite calculus approach to Ehrhart polynomials}
\author{Steven V Sam\\
\small Department of Mathematics\\[-0.8ex]
\small Massachusetts Institute of Technology\\
\small {\tt ssam@math.mit.edu}\\[-0.8ex]
\small \url{http://math.mit.edu/~ssam}\\
\and
Kevin M. Woods\\
\small Department of Mathematics\\[-0.8ex]
\small Oberlin College\\
\small {\tt kevin.woods@oberlin.edu}\\[-0.8ex]
\small \url{http://www.oberlin.edu/faculty/kwoods}
}
\date{\dateline{2009}{2010}\\
   \small Mathematics Subject Classification: 52C07}
\begin{document}

\maketitle

\begin{abstract}
  A rational polytope is the convex hull of a finite set of points in
  $\R^d$ with rational coordinates. Given a rational polytope $\P
  \subseteq \R^d$, Ehrhart proved that, for $t\in\Z_{\ge 0}$, the
  function $\#(t\P \cap \Z^d)$ agrees with a quasi-polynomial
  $L_\P(t)$, called the Ehrhart quasi-polynomial. The Ehrhart
  quasi-polynomial can be regarded as a discrete version of the volume
  of a polytope. We use that analogy to derive a new proof of
  Ehrhart's theorem. This proof also allows us to quickly prove two
  other facts about Ehrhart quasi-polynomials: McMullen's theorem about the
  periodicity of the individual coefficients of the quasi-polynomial
  and the Ehrhart--Macdonald theorem on reciprocity.

\end{abstract}

\section{Introduction.}
\label{section:introduction}
Let us first look at an (easy) example of computing a
\emph{volume}. Let $\Delta_{d}\subseteq\R^{d}$ be the convex hull of
the following $d+1$ points: the origin and the standard basis vectors
$e_i$, $1\le i\le d$. Let $t\Delta_{d}$ be its dilation by a factor of
$t$ (for nonnegative $t$). A straightforward way of computing the
volume of $t\Delta_{d}$ would be inductively in $d$, using the fact
that the $(d-1)$-dimensional cross section of $t\Delta_{d}$ at
$x_{d}=s$ is a dilated copy of $\Delta_{d-1}$:
\[
\vol\big(t\Delta_{d}\big) = \int_{0}^{t} \vol\big(s\Delta_{d-1}\big)\,
ds,
\]
and evaluating this iteratively gives us
$\vol\big(t\Delta_{d}\big)=t^{d}/d!$.

A generalization of volume is the \emph{Ehrhart (quasi-)polynomial},
which we define as follows. A $\emph{polytope}$, $\P$, is the convex
hull of finitely many points in $\R^{n}$, and the dimension,
$\dim(\P)$, of the polytope is the dimension of the affine hull of
$\P$. A {\it rational} (resp., {\it integral}) polytope is a polytope
all of whose vertices are rational (resp., integral). Given a polytope
$\P$ and a nonnegative $t$, let $t\P$ be the dilation of $\P$ by a
factor of $t$, and define the function $L_{\P} \colon \Z_{\ge
  0}\rightarrow \Z_{\ge 0}$ by
\[L_{\P}(t)=\#(t\P\cap\Z^{n}).\] Ehrhart proved
\cite{ehrhartpolynomial} that, if $\P$ is an integral polytope, then
$L_{\P}(t)$ is a polynomial of degree $\dim(\P)$. More generally, if
$\P$ is a rational polytope of dimension $d=\dim(\P)$, then
\[
  L_\P(t)=c_0(t) + c_1(t)t + \cdots + c_d(t)t^d\,,
\]
where the $c_i$ are periodic functions $\Z\rightarrow\Q$ (periodic
meaning that there exists an $s$ such that $c_{i}(t)=c_{i}(t+s)$ for
all $t$). Such functions are called \emph{quasi-polynomials}. Ehrhart
quasi-polynomials can be considered as a generalization of volume,
because, for full-dimensional $\P$, $c_{d}(t)$ is the constant
$\vol(\P)$. That is, $L_{\P}(t)$ is approximately
$\vol(t\P)=\vol(\P)t^{d}$, with lower degree terms correcting for the
fact that this is a discrete version of the volume computation.

Let us return to our polytope $t\Delta_{d}\subseteq\R^{d}$ and compute
its Ehrhart polynomial. For this example, our inductive approach to
computing volume works out well when translated to the discrete
problem. When $d=1$, $L_{\Delta_{1}}(t)=t+1$. We see that
\[L_{\Delta_{d}}(t)=\sum_{s=0}^{t}L_{\Delta_{d-1}}(s),\]
which we can prove by induction on $d$ and $t$ is
\[\frac{(t+1)(t+2)\cdots(t+d)}{d!}.\]
This calculation works out so well because expressions like the
falling factorial,
\[t^{\underline{d}}:=t(t-1)(t-2)\cdots(t-d+1),\] sum well. This is a
well-known fact from finite calculus \cite[Chapter
2]{concretemathematics}, and just as the polynomials $t^{d}$ form the
perfect basis of $\R[t]$ (as a vector space over $\R$) for integrating
because of the power rule, the polynomials $t^{\underline{d}}$ form the perfect
basis for summing, since
\begin{align} \label{powerrule}
\sum_{i=0}^{t}i^{\underline d} = \frac{1}{d+1}(t+1)^{\underline{d+1}}
\end{align}
(this fact can be proved quickly, by induction on $t$).

In Section \ref{section:proof}, we prove that this method of computing $L_{\P}(t)$ works
for any simplex (and hence for any polytope by triangulation). This
provides a new proof of Ehrhart's theorem that uses more minimal (but
less powerful) tools than other traditional proofs, such as proofs via
generating functions \cite{ehrhartpolynomial, stanley} or via
valuations \cite{mcmullenreciprocity}. Unlike these other proofs,
proving it for integral polytopes requires the full power of the proof
for rational polytopes. To prove it, we'll need a nice basis for the
vector space of quasi-polynomials of period $s$, which we shall
present in Section \ref{section:proof}.

This inductive computation of $L_{\P}(t)$ has two more desirable
outcomes: new and basic proofs of McMullen's theorem about periods of
the individual coefficients, $c_i(t)$, of the quasi-polynomial and of
Ehrhart--Macdonald reciprocity. We describe both of these results
below.

McMullen's theorem \cite[Theorem 4]{mcmullenreciprocity}, is as
follows. The \emph{$i$-index} of a rational polytope $\P$ is the
smallest number $s_i$ such that, for each $i$-dimensional face $F$ of
$\P$, the affine hull of $s_iF$ contains integer points. For this definition, we include $\mathcal P$ as a $d$-dimensional face of itself. Note that if $i \ge j$, then we must have
$s_i | s_j$: any $i$-dimensional face, $F$, contains $j$-dimensional faces, and so the affine hull of $s_{j}F$ contains integer points, though it may not be the smallest dilate to do so.

\begin{theorem}[McMullen's theorem] \label{ehrharttheorem} Given a rational polytope
  $\P\subseteq\R^n$, let $d=\dim(\P)$, and let
  \[
  L_\P(t)=\#(t\P\cap\Z^n)=c_0(t) + c_1(t)t + \cdots + c_d(t)t^d
  \]
  be the Ehrhart quasi-polynomial. Given $i$, with $0\le i\le d$, let
  $s_i$ be the $i$-index of $\P$. Then $s_i$ is a period of $c_i(t)$.
\end{theorem}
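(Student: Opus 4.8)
The plan is to refine the inductive computation of $L_\P(t)$ carried out in Section \ref{section:proof} so that it tracks, beyond the degree, the period of each coefficient $c_i(t)$. Since Ehrhart's theorem already guarantees that $L_\P$ is a quasi-polynomial, the entire content is a careful accounting of periods, and the right framework is the period-$s$ falling-factorial basis. The one feature I will exploit is the behaviour of the summation operator $g \mapsto \sum_m g(m)$ on this basis: like discrete integration it raises the degree by one and contributes a period-$1$ ``volume'' term to the new top coefficient, while the genuinely period-$s$ fluctuations of $g$ are pushed down into the lower-order coefficients. This degree-raising-with-period-promotion is exactly the mechanism that should reproduce McMullen's phenomenon, whereby $c_d = \vol(\P)$ has period $1 = s_d$ and the periods are permitted to grow as the degree decreases.

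I would first establish the theorem for a simplex $\Delta = \conv(v_0, \dots, v_d)$ by induction on $d$, exploiting its pyramid structure rather than a generic cross-section. Let $F = \conv(v_1, \dots, v_d)$ be the facet opposite the apex $v_0$, let $w$ be a primitive integer normal to $\aff(F)$, and write $a = \langle w, v_0\rangle$ and $\aff(F) = \{x : \langle w, x\rangle = b\}$. Slicing $t\Delta$ by the integer hyperplanes $\langle w, x \rangle = m$, each slice $t\Delta \cap \{\langle w,x\rangle = m\}$ is a translated dilate of $F$, so $L_\Delta(t) = \sum_m N(m,t)$ where, by the inductive hypothesis applied to $F$, the slice count $N(m,t)$ is a quasi-polynomial whose coefficient-periods are governed by the indices of $F$. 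Summing over the integers $m$ in the fluctuating range $[\lceil ta\rceil, \lfloor tb\rfloor]$ then recovers $L_\Delta$, and the periodicity of the result has exactly two sources: (i) the periods inherited from the slices, i.e.\ from the faces of $F$, and (ii) the periods introduced by the fluctuating endpoints $\lceil ta\rceil$ and $\lfloor tb\rfloor$, i.e.\ from the position of the apex $v_0$ relative to $\aff(F)$.

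The crux is then to verify that, for each $i$, the least common multiple of these two sources of periodicity appearing in the coefficient of $t^i$ is precisely the $i$-index $s_i$. Here the combinatorics of a pyramid are decisive: the $i$-faces of $\Delta$ are the $i$-faces of $F$ together with the pyramids over the $(i-1)$-faces of $F$, so $s_i(\Delta)$ is the least common multiple of $s_i(F)$ and of the indices that measure $v_0$ against the $(i-1)$-faces of $F$. I would match the first of these against source (i) and the second against source (ii), using the chain of divisibilities $s_d \mid \dots \mid s_0$ to confirm that the period-$1$ promotion of the top coefficient does not leak into the lower coefficients. With the simplex case in hand, a general rational polytope is treated by triangulating it using only its own vertices and combining the resulting Ehrhart quasi-polynomials by inclusion--exclusion.

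The step I expect to be the main obstacle is exactly this passage from simplices to arbitrary polytopes. A triangulation introduces new faces --- diagonals and interior cells --- whose affine hulls are not affine hulls of faces of $\P$, and such a foreign $i$-face can carry an $i$-index as large as $s_0(\P)$ rather than $s_i(\P)$. Because the theorem only asserts that $s_i$ \emph{is} a period (not the minimal one), the task is to show that every contribution whose period exceeds $s_i(\P)$ cancels in the alternating sum, leaving only the periods that come from genuine faces of $\P$. I would attempt this either by choosing the slicing direction compatibly with the triangulation, so that the foreign faces are ``vertical'' and introduce no extra fluctuation, or by pairing the interior faces shared between adjacent cells. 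A secondary, more routine technical point, already visible in the simplex step, is that when $F$ or a lower face fails to be full-dimensional one must replace $\Z^{n}$ by the induced lattice $\aff(F) \cap \Z^{n}$; I expect this to cause no difficulty once the full-dimensional case is settled.
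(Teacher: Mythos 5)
Your skeleton (induct on dimension, reduce to pyramids, sum slice counts with a period-tracking summation lemma in the period-$s$ falling-factorial basis) is the same as the paper's, and your description of how summation raises degree while promoting the top coefficient to period $1$ matches the paper's Lemma \ref{LemmaSumQP} exactly. But there are two genuine gaps. First, your simplex step does not go through as stated: you slice $t\Delta$ by the lattice hyperplanes $\langle w,x\rangle=m$ through a pyramid whose apex $v_0$ is a (generally non-integral) rational vertex. The slice at height $m$ is $\lambda tF+(1-\lambda)tv_0$ with $\lambda=(m-ta)/(tb-ta)$, i.e.\ a dilate of $F$ \emph{translated by a rational, $(m,t)$-dependent vector}. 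The inductive hypothesis concerns $\#(sF\cap\Z^n)$, pure dilations only, and says nothing about lattice points in translated dilates; those counts depend on the translation modulo $\Z^n$, so you would have to carry a strictly stronger two-variable statement (translated Ehrhart counts, with period control) through the entire induction. Second, the passage from simplices to general polytopes, which you yourself flag as the main obstacle, is left unresolved: a triangulation by the vertices of $\P$ creates interior faces whose affine hulls are not affine hulls of faces of $\P$, whose $i$-indices can be as large as $s_0$, and neither ``choose the slicing direction compatibly'' nor ``pair interior faces'' is an actual cancellation mechanism.

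The paper eliminates both problems with a single device, which is the real content of its proof: rather than triangulating with the polytope's own vertices, translate $\P$ by an integer vector so it misses the origin and decompose $L_\P(t)$ by the visible/hidden-face inclusion--exclusion \eqref{indicators} into pyramids $\P_F=\conv(0,F)$ over faces $F$ of $\P$, all with apex at the \emph{lattice point} $0$. Because the apex is a lattice point, (i) the integer-height slices of $t\P_F$ are exact dilates $i\bar{\QQ}$ with no translation, so
\[
L_{\P_F}(t)=\sum_{i=0}^{\floor{\frac{ta}{b}}}L_{\bar{\QQ}}(i)
\]
and Lemma \ref{LemmaSumQP} applies verbatim to the lower-dimensional polytope $\bar{\QQ}$; and (ii) every face of $\P_F$ is either a face of $\P$ (hence has the correct index) or contains the origin, in which case the affine hull of \emph{any} dilate of it contains the integer point $0$, so its index divides $s_i$ automatically. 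Thus no ``foreign'' faces with bad indices ever appear, and the computation $S_i=\frac{(s_i\frac{a}{b})b}{\gcd(s_i\frac{a}{b},\,a)}=s_i$ closes the induction. Your outline could likely be repaired, but the repair essentially is this decomposition, not the generic triangulation you propose.
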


For example, let $\D$ be the smallest positive integer such that $\D
\P$ is integral. Then $s_i$ divides $\D$, for all $i$, and so $\D$ is
a period of each $c_i(t)$. If $\P$ is an integral polytope, then $\D=1$, and we recover that $L_{\P}(t)$ is actually a \emph{polynomial}. As another example, if $\P$ is full-dimensional then
the affine span of $\P$ is all of $\R^d$, and therefore $c_d(t)$ has
period 1. As mentioned, $c_d(t)$ is the constant which is the volume
of $\P$. McMullen's theorem is proven in Section \ref{section:proof}, concurrently with
Ehrhart's theorem.

Now we describe \emph{Ehrhart--Macdonald reciprocity}. Since the
function $L_\P(t)$ agrees with a quasi-polynomial $p(t)$ for all
positive integers, a natural question to ask is if $p(t)$ has any
meaning when $t$ is a negative integer, and indeed it does. Given a
polytope $\P$, let $\P^{\circ}$ be the relative interior of $\P$, that
is, the interior when considering $\P$ as a subset of its affine
hull. The number of integer points in $t\P^{\circ}$ is similarly
counted by $L_{\P^{\circ}}(t)$.

\begin{theorem}[Ehrhart--Macdonald reciprocity] \label{reciprocity}
  Let $\P$ be a rational polytope. Then  \[
  L_{\P^{\circ}}(t) = (-1)^{\dim(\P)} L_\P(-t)\,.
  \]
\end{theorem}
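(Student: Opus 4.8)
The plan is to re-run the cross-section summation that proves Ehrhart's theorem in Section~\ref{section:proof}, and to observe that the summation operator itself obeys a reciprocity law accounting simultaneously for the sign $(-1)^{\dim(\P)}$ and for the passage from $\P$ to $\P^{\circ}$. I will induct on $d=\dim(\P)$. After restricting to $\aff(\P)$ and identifying the induced lattice with $\Z^d$, I may assume $\P$ is full-dimensional, so that $\P^{\circ}$ is its topological interior; the base case $d=0$ is immediate, since then $L_\P=L_{\P^{\circ}}=1$ and $(-1)^0L_\P(-t)=1$. For the inductive step I slice $t\P$ by the integer level sets of a lattice-direction coordinate, so that the slices $\P_s$ are $(d-1)$-dimensional rational polytopes and
\[
L_\P(t)=\sum_{s}\#\big((t\P)_s\cap\Z^{d-1}\big).
\]
As in the computation of $L_{\Delta_d}$ in the introduction, after reindexing this expresses $L_\P(t)$ as the summation operator $\Sigma$ applied to a quasi-polynomial $f$ built from the slice counts, exactly the packaging produced by the finite-calculus basis and the power rule~(\ref{powerrule}).

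The engine of the argument is a reciprocity for $\Sigma$. If $f$ is a quasi-polynomial and $F(t)=\sum_{i=0}^{t}f(i)$ for $t\ge 0$, then $F$ agrees with a quasi-polynomial satisfying $F(t)-F(t-1)=f(t)$ as an identity of quasi-polynomials, hence for all integers $t$; together with $F(-1)=F(0)-f(0)=0$ this telescopes to
\[
F(-t)=-\sum_{i=1}^{t-1}f(-i)\qquad(t\ge 1).
\]
The decisive feature is that the reversed sum is \emph{open}: it omits both endpoints $i=0$ and $i=t$. In the slicing picture those endpoints are precisely the two extreme slices, namely the bottom and top faces of $\P$ in the slicing direction, so dropping them is exactly the difference between summing over $\P$ and summing over $\P^{\circ}$.

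Combining the two ingredients closes the induction. The inductive hypothesis, applied to the $(d-1)$-dimensional slices, controls the values of the slice quasi-polynomial at negative integers, giving $f(-i)=(-1)^{d-1}f^{\circ}(i)$, where $f^{\circ}(i)$ counts the relative-interior lattice points of the slice at height $i$. Substituting into the reciprocity for $\Sigma$ yields
\[
L_\P(-t)=-(-1)^{d-1}\sum_{i=1}^{t-1}f^{\circ}(i)=(-1)^d\sum_{i=1}^{t-1}f^{\circ}(i).
\]
Since a point of $t\P$ lies in $\P^{\circ}$ exactly when its height is strictly between the two extreme values and it lies in the relative interior of its slice, the right-hand sum counts $t\P^{\circ}\cap\Z^d$, and therefore $L_{\P^{\circ}}(t)=(-1)^d L_\P(-t)$.

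I expect the main obstacle to be the bookkeeping that makes the slicing genuinely inductive and the endpoint correspondence exact, rather than any single computation. Because a lattice-direction slicing of a general rational polytope produces slices $\P_s$ that change combinatorial type and sit at non-integral effective dilations, the slice counts are only quasi-polynomial, so care is needed to package $\sum_s\#((t\P)_s\cap\Z^{d-1})$ as one application of $\Sigma$ to a quasi-polynomial and to track how the periods and $i$-indices interact with the reindexing. One must also verify precisely that the relative interior of $t\P$ slices into the relative interiors of the $\P_s$ over exactly the strictly-interior integer heights, so that the open sum delivered by the reciprocity lemma matches $L_{\P^{\circ}}$ term by term; this interior/boundary accounting is where the real work lies.
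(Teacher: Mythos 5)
Your overall strategy (induction on dimension, a reciprocity law for the summation operator, slicing into cross-sections) is the same as the paper's, and your ``reciprocity for $\Sigma$'' is precisely the paper's Lemma \ref{NegativeLemma}. But there is a genuine gap at the step where you claim that slicing a general full-dimensional polytope ``expresses $L_\P(t)$ as the summation operator $\Sigma$ applied to a quasi-polynomial $f$ built from the slice counts.'' This is false for a general polytope, and it is not bookkeeping. The slice count $\#\big((t\P)_s\cap\Z^{d-1}\big)$ depends on both $t$ and $s$, not on $s$ alone: already for the unit square $\P=[0,1]^2$, sliced along $x_2$, the slice at every integer height $0\le s\le t$ contains $t+1$ lattice points, so
\[
L_\P(t)=\sum_{s=0}^{t}(t+1),
\]
and the summand is not any function of $s$ independent of $t$. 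Consequently Lemma \ref{NegativeLemma} cannot be applied to this sum, and, equally fatal, the inductive hypothesis cannot be invoked to give $f(-i)=(-1)^{d-1}f^{\circ}(i)$: reciprocity in dimension $d-1$ is a statement about the dilates $i\QQ$ of one \emph{fixed} polytope $\QQ$, whereas your slices $(t\P)_s$ change combinatorial type as $s$ varies and are not dilates of any fixed $(d-1)$-dimensional polytope. The introduction's computation of $L_{\Delta_d}$ works only because $\Delta_d$ is a pyramid with apex at the origin, so its slices \emph{are} dilates of $\Delta_{d-1}$; that feature does not extrapolate.

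The missing idea is the paper's reduction to pyramids. The slicing argument is valid exactly when $\P=\conv\{0,\QQ\}$ with $\QQ$ a $(d-1)$-dimensional rational polytope in the hyperplane $x_d=\frac{a}{b}$: then the slice of $t\P$ at integer height $i$ is $i\bar{\QQ}$ for the fixed polytope $\bar{\QQ}=\frac{b}{a}\QQ$, the summand is the honest Ehrhart quasi-polynomial $L_{\bar{\QQ}}(i)$, the sum is $\sum_{i=0}^{\floor{ta/b}}L_{\bar{\QQ}}(i)$, and both Lemma \ref{NegativeLemma} and the inductive hypothesis apply (with some care about the floor versus ceiling at the top slice, which is where your ``open sum'' observation is used). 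To handle a general rational polytope, the paper then translates $\P$ off the origin and writes $L_\P$ as the signed sum \eqref{indicators} of pyramid counts $L_{\P_F}$ over visible and hidden faces $F$, together with lower-dimensional terms $L_F$ handled by induction, and checks that substituting $-t$ into that identity counts $(-1)^d$ times the interior points. Your proof needs this decomposition (or some substitute for it); without it, the inductive step does not get off the ground for anything other than pyramids.
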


This statement was conjectured by Ehrhart, and he proved it in many
special cases. The general case was proven by Macdonald in
\cite{macdonald}. This will be proven in Section \ref{section:reciprocity}, using the following idea, which could be called a reciprocity theorem for finite calculus.

Suppose $f(s)$ is a quasi-polynomial, and suppose we are examining
$F(t)=\sum_{i=0}^{t}f(i)$. We will show in Section \ref{section:proof} that there is a
quasi-polynomial $p(t)$ such that $F(t)=p(t)$, for nonnegative
integers $t$. How about for negative integers? Certainly we can evaluate $p$ at a negative integer, $-t$, but we need to define what
\[F(-t)=\sum_{i=0}^{-t}f(i)
\]
should mean. Assuming that we want the summation rule
\[
\sum_{i=0}^{a}f(i)+\sum_{i=a+1}^{b}f(i) = \sum_{i=0}^{b}f(i)
\]
to hold for all integers $a$ and $b$, we must
have that
\[\sum_{i=0}^{-t}f(i) + \sum_{i=-t+1}^{0}f(i)=\sum_{i=0}^{0}f(i),\]
which means we should define
\[F(-t)=\sum_{i=0}^{-t}f(i) :=-\sum_{i=-t+1}^{-1}f(i).\]
Fortunately, when we plug in negative values, we still have $F(-t)=p(-t)$. This is the content of the following lemma, which we prove in Section  \ref{section:reciprocity}.

\begin{lemma}[Reciprocity for finite calculus] \label{NegativeLemma} Suppose that $f(i)$ is a
  quasi-polynomial in $i$. For nonnegative integers $t$ define the
  function
  \[
  F(t) = \sum_{i=0}^t f(i)\,.
  \]
  Then there is a quasi-polynomial $p(t)$ such that $F(t) = p(t)$ for
  all nonnegative integers $t$, and furthermore,
  \[
  p(-t) = -\sum_{i=-t+1}^{-1} f(i)
  \]
  for all $t>0$.
\end{lemma}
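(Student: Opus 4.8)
The plan is to reduce the entire statement to a single structural fact from finite calculus: every quasi-polynomial $f$ admits a quasi-polynomial \emph{antidifference}, i.e.\ a quasi-polynomial $G$ with $G(i+1)-G(i)=f(i)$ for all $i\in\Z$. Granting this, both halves of the lemma become telescoping identities. First I would set $p(t):=G(t+1)-G(0)$, which is manifestly a quasi-polynomial in $t$. For a nonnegative integer $t$,
\[
F(t)=\sum_{i=0}^{t}f(i)=\sum_{i=0}^{t}\bigl(G(i+1)-G(i)\bigr)=G(t+1)-G(0)=p(t),
\]
so $p$ witnesses the first claim. (This $p$ is forced to be unique: two quasi-polynomials agreeing on all nonnegative integers agree at infinitely many points of each residue class, hence are identical, so it does not matter which antidifference $G$ we chose.) For reciprocity, the \emph{same} telescoping run over the negative range $i=-t+1,\dots,-1$ gives, for $t>0$,
\[
-\sum_{i=-t+1}^{-1}f(i)=-\sum_{i=-t+1}^{-1}\bigl(G(i+1)-G(i)\bigr)=-\bigl(G(0)-G(-t+1)\bigr)=G(-t+1)-G(0)=p(-t),
\]
which is exactly the desired equality.

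So the only real content is producing the antidifference $G$, and here the power rule \eqref{powerrule} does the work. Since the antidifference of a sum is the sum of antidifferences, it suffices to produce one for each element of a spanning set of the period-$s$ quasi-polynomials. Fixing a period $s$ of $f$, I would use the basis
\[
g_{r,d}(i)=\begin{cases}\bigl(\tfrac{i-r}{s}\bigr)^{\underline{d}}&\text{if }i\equiv r\pmod s,\\[2pt]0&\text{otherwise,}\end{cases}
\qquad 0\le r<s,\ d\ge 0,
\]
which spans because on the residue class $i\equiv r$ the quantity $k=(i-r)/s$ ranges over $\Z$ and $\{k^{\underline d}\}_{d\ge0}$ is a basis for polynomials in $k$. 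Summing a single $g_{r,d}$ picks out the arithmetic progression $i=r,r+s,\dots$, so by \eqref{powerrule},
\[
\sum_{i=0}^{t}g_{r,d}(i)=\sum_{k=0}^{\lfloor(t-r)/s\rfloor}k^{\underline{d}}=\frac{1}{d+1}\Bigl(\lfloor(t-r)/s\rfloor+1\Bigr)^{\underline{d+1}}
\]
for every $t\ge 0$ (the boundary case $0\le t<r$ is automatic, since then $\lfloor(t-r)/s\rfloor=-1$ and $0^{\underline{d+1}}=0$).

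The key observation is that the right-hand side is itself a quasi-polynomial in $t$: writing $\lfloor(t-r)/s\rfloor+1=\tfrac{t-r}{s}+1-\tfrac{(t-r)\bmod s}{s}$ exhibits it as a linear function plus a period-$s$ correction, so applying the polynomial $\tfrac{1}{d+1}x^{\underline{d+1}}$ keeps it a period-$s$ quasi-polynomial. Calling this quasi-polynomial $\Phi_{r,d}(t)$, the function $G_{r,d}(x):=\Phi_{r,d}(x-1)$ satisfies $G_{r,d}(i+1)-G_{r,d}(i)=g_{r,d}(i)$ for all nonnegative $i$ by construction; since both sides are quasi-polynomials that agree on every residue class infinitely often, they agree for all $i\in\Z$, so $G_{r,d}$ is the required antidifference. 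Summing the $G_{r,d}$ with the coefficients expressing $f$ in the basis produces a quasi-polynomial antidifference of $f$, completing the reduction.

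The main obstacle is not the algebra but the bookkeeping around the floor function: one must check that a formula derived only for large $t$ (or for $t$ in a fixed residue class) really does extend to a single quasi-polynomial valid at all integers, including negative arguments, so that the telescoping identities above may legitimately be read off at $-t$. This is exactly the point where I would emphasize that an identity between two quasi-polynomials, once verified on all sufficiently large integers of each residue class, holds identically; that rigidity is what lets the positive-side computation control the value $p(-t)$.
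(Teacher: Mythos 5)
Your proof is correct, but it reaches the result by a genuinely different mechanism than the paper's. The paper takes the quasi-polynomial $p$ directly from Lemma \ref{LemmaSumQP} and then, for each fixed $n>0$, introduces the shifted sum $F_n(t) = -C_n + \sum_{i=-n}^{t} f(i)$ with $C_n = \sum_{i=-n}^{-1} f(i)$; Lemma \ref{LemmaSumQP} (after reindexing) shows $F_n$ agrees with a quasi-polynomial $p_n$ for $t \ge -n$, rigidity (agreement with $p$ on all nonnegative integers) forces $p_n = p$, and evaluating at $t=-n$ yields exactly the claimed value of $p(-n)$. You instead build a single global antidifference: a quasi-polynomial $G$ with $G(i+1)-G(i) = f(i)$ for all $i \in \Z$, obtained by summing the falling-factorial basis elements via the power rule \eqref{powerrule} and observing that the resulting floor-function formula defines a quasi-polynomial on all of $\Z$; both halves of the lemma then become telescoping identities for $p(t) = G(t+1)-G(0)$. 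What the paper's route buys: since Lemma \ref{LemmaSumQP} is already proved, no new computation is needed --- the whole proof is a shift-and-rigidity trick. What your route buys: it is self-contained modulo the power rule (in effect you re-prove the $a=b=1$ case of Lemma \ref{LemmaSumQP} inside the antidifference construction), and it exposes the conceptual reason the lemma holds: summation is inverted by a globally defined antidifference, so values at negative arguments are forced by telescoping --- a discrete fundamental theorem of calculus. Two small remarks: in the context of the paper you could shorten your argument by citing Lemma \ref{LemmaSumQP} with $a=b=1$ rather than re-deriving it; and your claim that $G_{r,d}(i+1)-G_{r,d}(i)=g_{r,d}(i)$ holds ``for all nonnegative $i$ by construction'' is literally by construction only for $i\ge 1$ (at $i=0$ one also needs $\Phi_{r,d}(-1)=0$, which does hold because $\lfloor(-1-r)/s\rfloor=-1$ for $0\le r<s$), but the quasi-polynomial rigidity you invoke covers that case regardless, so this is cosmetic rather than a gap.
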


\section{Ehrhart's theorem and McMullen's theorem.}
\label{section:proof}

As mentioned in the introduction, ``discrete integration'' of
polynomials is made easy by using the basis $t^{\underline d}$ of
$\R[t]$. We will use the following generalization, which tells us how
to discretely integrate quasi-polynomials.
\begin{lemma} \label{LemmaSumQP} Let $f(t) = c_0(t) + c_1(t)t + \cdots
  + c_d(t)t^d$ be a quasi-polynomial of degree $d$, where $c_i(t)$ is
  a periodic function of period $s_i$, for each $i$. Define $F \colon
  \Z_{\ge 0}\rightarrow \Q$ by
\[
F(t) = \sum_{i=0}^{\floor{\frac{at}{b}}} f(i)\,,
\]
where $a,b \in \Z$ and $\floor{\cdot}$ is the greatest integer
function. Let $S_i = \frac{s_ib}{\gcd(s_i,a)}$. Then $F(t) = C_0(t) +
C_1(t)t + \cdots + C_{d+1}(t)t^{d+1}$ is a quasi-polynomial of degree
$d+1$. Furthermore, a period of $C_i(t)$ is $\lcm\{S_{d}, S_{d-1},
\ldots, S_{i}\}$, for $0 \le i \le d$, and $C_{d+1}$ has period 1.
\end{lemma}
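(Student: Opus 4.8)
The plan is to expand $F(t)$ monomial by monomial and bound the period of each resulting coefficient. By linearity of summation it is enough to analyze, for each $e$ with $0\le e\le d$, the partial sum
\[
G_e(t)=\sum_{i=0}^{\floor{at/b}} c_e(i)\,i^{e},
\]
and then add the $G_e$ back together. I would aim to show that each $G_e$ is a quasi-polynomial of degree $e+1$ all of whose coefficients are periodic of period dividing $S_e$, and --- this is the decisive extra fact --- that the top coefficient of $G_e$, namely that of $t^{e+1}$, is an honest constant. Granting this, $F=\sum_{e=0}^{d}G_e$ is a quasi-polynomial of degree $d+1$; the coefficient $C_i$ of $t^{i}$ collects the $t^{i}$-coefficients of those $G_e$ with $e+1\ge i$, that is $e\ge i-1$. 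The summand $G_{i-1}$ contributes only its top (constant) coefficient, while each $G_e$ with $i\le e\le d$ contributes a coefficient of period dividing $S_e$; hence $C_i$ has period dividing $\lcm\{S_i,\dots,S_d\}$, and $C_{d+1}$, fed only by the constant top coefficient of $G_d$, has period $1$.

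To understand a single $G_e$, I would break the sum into residue classes modulo $s_e$, using $c_e(i)=\sum_{r=0}^{s_e-1}c_e(r)\,[\,i\equiv r\pmod{s_e}\,]$ and the substitution $i=r+s_e j$. The inner sum becomes $\sum_{j=0}^{M_r(t)}(r+s_e j)^{e}$, where the nested-floor identities $\floor{y}-r=\floor{y-r}$ (for $r\in\Z$) and $\floor{\floor{y}/m}=\floor{y/m}$ collapse the upper limit to
\[
M_r(t)=\floor{\frac{\floor{at/b}-r}{s_e}}=\floor{\frac{at-rb}{bs_e}}.
\]
Since $(r+s_e j)^{e}$ is a degree-$e$ polynomial in $j$ with leading coefficient $s_e^{e}$, the power rule \eqref{powerrule}, expressed through the falling-factorial basis, rewrites the inner sum as a polynomial $Q_r$ of degree $e+1$ in $M_r(t)$ whose leading term is $\frac{s_e^{e}}{e+1}M_r(t)^{e+1}$.

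The final step is to unfold $M_r(t)$ as a quasi-linear function of $t$: writing $M_r(t)=\frac{a}{bs_e}\,t+\beta_r(t)$, the remainder $\beta_r$ (a constant minus the fractional part $\{(at-rb)/(bs_e)\}$) is periodic, and its minimal period $\frac{bs_e}{\gcd(a,bs_e)}$ divides $S_e=\frac{s_e b}{\gcd(s_e,a)}$ because $\gcd(a,s_e)\mid\gcd(a,bs_e)$; thus $S_e$ is a legitimate, if not always minimal, period. Expanding $Q_r(M_r(t))$ in powers of $t$, every coefficient is a polynomial in $\beta_r(t)$ and so has period dividing $S_e$, with the single exception of the coefficient of $t^{e+1}$: this comes purely from the $\frac{a}{bs_e}t$ part of $M_r(t)^{e+1}$, giving the constant $\frac{s_e^{e}}{e+1}\big(\frac{a}{bs_e}\big)^{e+1}$ which, summed against $c_e(r)$ over $r$, is the claimed leading coefficient $\frac{a^{d+1}}{(d+1)b^{d+1}s_d}\sum_{r}c_d(r)$ in the top case $e=d$.

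The main obstacle I anticipate is not the summation itself but the periodicity bookkeeping: one must verify cleanly that each $G_e$'s leading coefficient is genuinely constant, since it is exactly this constancy that removes $S_{i-1}$ from the list and sharpens the naive bound $\lcm\{S_{i-1},\dots,S_d\}$ to the asserted $\lcm\{S_i,\dots,S_d\}$. The rest --- collecting coefficients across the $G_e$ and confirming that no periodic fluctuation survives in the top degree --- should be routine given the explicit leading term above.
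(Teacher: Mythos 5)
Your proof is correct and takes essentially the same route as the paper's: both split $f$ into residue classes modulo the periods, apply the falling-factorial power rule \eqref{powerrule} to turn each piece into a polynomial of degree one higher in the nested floor $\floor{\frac{\floor{at/b}-r}{s_e}}$, and then win by noting that the leading coefficient is constant while the lower coefficients have period $S_e$, followed by the same $\lcm$ bookkeeping. The only cosmetic differences are that the paper packages the residue-class/falling-factorial decomposition into an explicit basis $g_{d,s,j}$ of the space of quasi-polynomials, and verifies periodicity by substituting $t=mS+k$ into the nested floor rather than splitting it into a linear part plus a periodic remainder as you do.
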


Before we prove this lemma, let's look at an example. Suppose that
\[f(t)=\begin{cases} t/2 & \text{if $t$ even}\\0 & \text{if $t$ odd} \end{cases},\]
and we would like to evaluate the sum
\[F(t)=\sum_{i=0}^{\floor{3t/2}}f(i).\]
We have that $s_{1}=2$ and $s_{0}=1$, which give us $S_{1}=4$ and $S_{0}=2$. The lemma tells us that the period of the $t^{2}$ coefficient of $F(t)$ should be 1, the period of the $t^{1}$ coefficient should be $S_{1}=4$, and the period of the $t^{0}$ coefficient should be $\lcm\{S_{1},S_{0}\}=4$. Indeed,
\[F(t)= \sum_{i=0}^{\floor{3t/2}}f(i) = \sum_{j=0}^{\floor{3t/4}}j = \frac{1}{2}\left(\floor{3t/4}+1\right)^{\underline{2}}=
\begin{cases} \frac{9}{32}t^{2}+\frac{3}{8}t & \text{if }t\equiv 0\mod{4}\\
\frac{9}{32}t^{2}-\frac{3}{16}t-\frac{3}{32}  & \text{if }t\equiv 1\mod{4}\\
\frac{9}{32}t^{2}\hspace{0.41in} -\frac{1}{8}  & \text{if }t\equiv 2\mod{4}\\
\frac{9}{32}t^{2}+\frac{3}{16}t-\frac{3}{32}    & \text{if }t\equiv 3\mod{4}\\
\end{cases}.\]
Notice that this shows why taking the lcm of $S_{d},\ldots,S_{i}$ is necessary: $f(t)$ has periodicity only in the $t^{1}$ coefficient, but affects the period of both the $t^{1}$ and $t^{0}$ coefficients of $F(t)$.

\begin{proof}[Proof of \ref{LemmaSumQP}]
  Given $d$, $s$, and $j$, define the periodic function
\[
\chi_{s,j}(t)=\begin{cases} 1 & \text{if }t\equiv j\mod{s}\\
  0 & \text{otherwise}\end{cases}
\]
and the quasi-polynomial
\[
g_{d,s,j}(t) = \chi_{s,j}(t) \prod_{k=0}^{d-1} \left( \frac{t-j}{s}-k
\right)\,.
\]
For instance, in the preceding example, we had $f(t)=g_{1,2,0}(t)$.
For $t\equiv j\mod{s}$, substituting $t=ms+j$ gives us $g_{d,s,j}(ms+j)=m^{\underline{d}}$. This implies that, for a given $d$ and $s$, the set of $g_{d',s,j}$ such that $0\le d'\le d$ and $0\le j <s$ forms a basis (and, as we will see, a nice basis!) for the set of all quasi-polynomials of degree at most $d$ with period $s$. Writing our function $f(t)$ as a linear combination of such
quasi-polynomials (for various $d$, $s$, and $j$), it suffices to
prove that
\[
G_{d,s,j}(t) = \sum_{i=0}^{\floor{\frac{at}{b}}} g_{d,s,j}(i)
\]
is a quasi-polynomial of degree $d+1$ and period
$S=\frac{sb}{\gcd(s,a)}$ whose leading term has period 1
coefficient.

We have that, for any $k\in\Z_+$,
\begin{align*}
  \sum_{i=0}^k g_{d,s,j}(i) &=\sum_{m=0}^{\floor{\frac{k-j}{s}}} \!\! g_{d,s,j}(ms+j)\\
  &= \sum_{m=0}^{\floor{\frac{k-j}{s}}} \!\! m^{\underline d}\\
  &=\frac{1}{d+1}\left( \floor{\frac{k-j}{s}} + 1
    \right)^{\underline{d+1}},
\end{align*}
where the last line follows from \eqref{powerrule}, and so
\[
G_{d,s,j}(t) = \frac{1}{d+1} \left( \floor{ \frac{ \floor{
          \frac{at}{b}}-j}{s}} + 1 \right)^{\underline{d+1}}\,.
\]
One can check that this is a quasi-polynomial of period $S =
\frac{sb}{\gcd(s,a)}$ whose leading coefficient has period 1 by
substituting $t=mS+k$:
\begin{align*}
  G_{d,s,j}\left(m\frac{sb}{\gcd(s,a)}+k\right) &=
  \frac{1}{d+1} \left(\floor{\frac{\frac{ams}{\gcd(s,a)} + \floor{\frac{ak}{b}}-j}{s}} +  1 \right)^{\underline{d+1}}\\
  &=\frac{1}{d+1}\left(\frac{am}{\gcd(s,a)}+\floor{\frac{\floor{\frac{ak}{b}}-j}{s}}+1\right)^{\underline{d+1}},
\end{align*}
a polynomial in $m$ whose leading coefficient does not depend on $k$.
 The lemma follows.
\end{proof}

\begin{proof}[Proof of Ehrhart's Theorem and of \ref{ehrharttheorem}]
  We prove this by induction on $d$.  As the base case, consider
  $d=0$.  Then $\P$ is a point in $\Q^n$.  Let $\D$ be the smallest
  positive integer such that $\D \P$ is an integer point.  Then we see
  that
\[
L_\P(t)=c_0(t), \text{ where } c_0(t)=
\begin{cases}1 &\text{if }\D\big|t\\
0 & \text{otherwise}
\end{cases}.
\]

The base case follows. Now we assume that the theorem is
true for all $d'<d$.  We divide the proof into a number of steps:

~

\noindent \emph{1: Without loss of generality, we may assume that $\P$
  is full-dimensional, that is, $\dim(\P)=n$.}

Let $s'$ be the smallest positive integer such that the affine hull of
$s'\P$ contains integer points. Then we must have that $s'$ divides
each $s_i$. Let $V$ be the affine hull of $s'\P$. There is an affine
transformation $T \colon V \to \R^{\dim(\P)}$ that maps $V\cap\Z^n$
bijectively onto $\Z^{\dim(\P)}$. Let $\P'=T(s'\P)$. Then $\P'$ is a
full-dimensional polytope. If we can prove the theorem for $\P'$, it
will follow for $\P$, because
\[
L_\P(t)=\begin{cases}
  L_{\P'}\left(\frac{t}{s'}\right) & \text{if $s'$ divides $t$}\\
  0 & \text{otherwise}
\end{cases}.
\]

\noindent \emph{2: Without loss of generality, we may assume that
  $\P=\conv\{0, \QQ\}$, where $\QQ$ is a $(d-1)$-dimensional rational
  polytope.}

Assume that we have a general rational polytope $\P$, with
$\dim(\P)=d$. Without loss of generality, translate it by an integer
vector so that it does not contain the origin. We simply write
$L_\P(t)$ as sums and differences of polytopes of the form $\conv\{0,
\QQ\}$ (including lower dimensional $\QQ$), using inclusion-exclusion
to make sure that the intersections of faces are counted properly. The
exact form of this decomposition is not important for this proof, but
it will be important in Section \ref{section:reciprocity}, so we will
present it now. We examine two types of faces of $P$:
\begin{itemize}
\item The collection $\mathcal{F}_v$ of faces $F$ of $\P$ that are
  ``visible'': a facet (that is, a $(d-1)$-dimensional face) is said
  to be visible if, for all $a\in F$ and all $\lambda$ with
  $0<\lambda<1$, we have $\lambda a\notin \P$, and a lower dimensional
  face is visible if every facet that it is contained in is visible.

\item The collection $\mathcal{F}_h$ of faces $F$ of $\P$ that are
  ``hidden'': a facet is ``hidden'' if it is not visible, and a lower
  dimensional face is hidden if every facet that it is contained in is
  hidden.
\end{itemize}
Some lower dimensional faces may be neither visible nor hidden. For a
face $F$ of $\P$, let $\P_F=\conv(0,F)$. Then, using
inclusion-exclusion,
\begin{align} \label{indicators} L_\P(t) &= \sum_{F\in
    \mathcal{F}_h}(-1)^{d-1-\dim(F)} L_{\P_{F}}(t) - \sum_{F\in
    \mathcal{F}_v}(-1)^{d-1-\dim(F)} (L_{\P_{F}}(t) - L_F(t))\,.
\end{align}

\begin{figure}
  \begin{center}
    \includegraphics[height=.3\textheight]{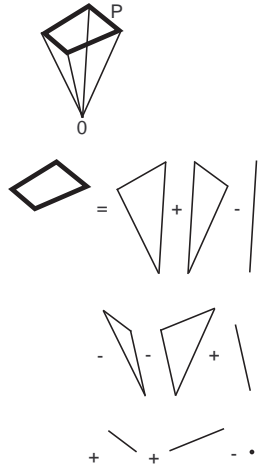}
  \end{center}
  \caption{Decomposition of a 2-dimensional polytope.}
  \label{polygondecomp}
\end{figure}

An example of this decomposition for a polygon is given in Figure
\ref{polygondecomp}. So as not to interrupt the flow of the proof, we
offer a proof of the correctness of \eqref{indicators} at the end of
the section.


For a given face $F$ of $\P$, the $i$-dimensional faces $F'$ of $\P_F$
are either faces of $\P$ or contain the origin. In either case, the
affine hull of $s_iF'$ contains integer points, so they meet the
conditions of the theorem. The theorem is true for the third piece of
the sum, $\sum_{F\in \mathcal{F}_v} (-1)^{d-\dim(F)} L_{F}(t)$, by the
induction hypothesis, since these are faces of smaller dimension than
$\P$.

~

\noindent \emph{3: Without loss of generality, we may assume that
  $\P=\conv\{0, \QQ\}$, where $\QQ$ is a $(d-1)$-dimensional rational
  polytope lying in the hyperplane $x_d = q$, where $q \in
  \Q_{>0}$.}

Perform a unimodular transformation such that this is true.

~

\noindent \emph{4: We prove the theorem for such a $\P$.}

We have $\P=\conv\{0, \QQ\}$, where $\QQ$ is a $(d-1)$-dimensional
rational polytope lying in the hyperplane $x_d = \frac{a}{b}$, where
$a,b\in\Z_{>0}$ and $\gcd(a,b)=1$.  Since faces of $\QQ$ are also
faces of $\P$, it follows that the affine hull of $s_iF$, where $F$ is
an $i$-dimensional face of $\QQ$, contains integer points. Let
$\bar{\QQ}=\frac{b}{a}\QQ$, lying in the hyperplane $x_{d}=1$. Then the affine hull of $s_{i}\frac{a}{b}\bar{F}$,
where $\bar{F}$ is an $i$-dimensional face of $\bar{\QQ}$, contains integer
points. We have that $t\P\cap\Z^d$ is the disjoint union
\[
\bigcup_{i=0}^{\floor{ \frac{ta}{b}}} i\bar{\QQ} \cap \Z^d\,,
\]
and so
\[
L_\P(t) = \sum_{i=0}^{\floor{\frac{ta}{b}}} L_{\bar{\QQ}}(i)\,.
\]
By Lemma \ref{LemmaSumQP}, this is a quasi-polynomial of degree $d$.
Furthermore, the $S_i$ in the statement of Lemma \ref{LemmaSumQP} are
given by
\[
S_i = \frac{\left(s_{i}\frac{a}{b}\right)b}{\gcd(s_{i}\frac{a}{b},a)} = \frac{as_i}{a}
= s_i\,.
\]
Since $s_d \big| s_{d-1} \big| \cdots \big| s_0$, $s_{i}=\lcm(s_{d},s_{d-1},\ldots,s_{i})$, and the coefficients of
$L_\P(t)$ have the desired periods. The theorem follows.
\end{proof}

More can be said about the period of $c_{d-1}(t)$. In this case,
$s_{d-1}$ is not only \emph{a} period but is guaranteed to be the
\emph{minimum} period. A proof of this fact along with a study of
maximal period behavior is given in \cite{quasiperiod} and relies only
on McMullen's theorem and Ehrhart--Macdonald reciprocity, which we
prove in the next section.

We also remark that, following the constant term through the
induction, we get for free another well-known fact: that the constant
term of the Ehrhart polynomial of a polytope is 1. More
precisely, the constant term of the Ehrhart polynomial for a polytopal
complex (open or closed) is equal to its Euler characteristic.

We close this section with a proof of \eqref{indicators}.

\begin{proof}[Proof of \eqref{indicators}] One can prove that this inclusion-exclusion is correct combinatorially, but the quickest proof to understand is topological. Let $\mathcal C=\bigcup_{F\in\mathcal F_{h}}F$ and $\mathcal C'=\bigcup_{F\in\mathcal F_{v}} F$. We only need to prove that the first
  sum in \eqref{indicators} counts each $x\in\conv\{0,\mathcal C\}$ exactly once, and that the second sum counts each $x\in\conv\{0, \mathcal C'\} \setminus \mathcal C'$ exactly once. Let's examine the first sum. It suffices to prove that each $x\in \mathcal C$ is counted correctly, as each $\lambda x\in\conv\{0,\mathcal C\}$ is counted identically to $x$.
  
 Assume for the moment that $x$ lies in the interior of $\mathcal C$, considered as a $(d-1)$-dimensional CW complex. Let $B$ be the intersection of $\mathcal C$ with the closure of a sufficiently small ball around $x$ (small enough so that $B$ only intersects faces $F$ that contain $x$). $B$ inherits a CW complex structure from $\mathcal C$. In particular, there is a one-to-one correspondence between $F\in\mathcal F_{h}$ that contain $x$ and cells of $B$ that are not contained in the boundary $\partial B$.  Therefore, in
  the first sum of \eqref{indicators}, the number of times the point $x$ is counted is
  \[n(x) = (-1)^{d-1} \sum_{F \owns x} (-1)^{\dim(F)} = (-1)^{d-1}
  \big(\chi(B) - \chi(\partial B)\big),\]
 where $\chi$ is the Euler characteristic (the alternating sum of the number of cells of each dimension). Since $B$ is contractible and $\partial B$ is homeomorphic to a $(d-2)$-sphere, $\chi(B)=1$ and $\chi(\partial B)= 1 + (-1)^{d-2}$. Hence
\[n(x) = (-1)^{d-1}\left[1-\left(1+(-1)^{d-2}\right)\right] = 1,\]
so $x$ is properly counted in the sum.

If $x$ is not in the interior of $\mathcal C$, notice that $x$ is counted exactly the same as any ``nearby'' point that is in the interior: the key is that faces on the boundary of $\mathcal C$ are not defined to be hidden faces in $\mathcal F_{h}$, because they are also contained in visible facets. Therefore these $x$ are also counted correctly. A similar argument shows that the
  second sum properly counts each $x\in\conv\{0, \mathcal C'\} \setminus \mathcal C'$.
 \end{proof}

\section{Reciprocity.}
\label{section:reciprocity}

In this section, we prove Theorem~\ref{reciprocity}. First we prove Lemma \ref{NegativeLemma}, which gives a reciprocity theorem for finite calculus.

\begin{proof}[Proof of Lemma \ref{NegativeLemma}]
  By Lemma \ref{LemmaSumQP}, there is a quasi-polynomial $p(t)$ such
  that $F(t)=p(t)$ for nonnegative integers $t$. Let $n$ be a fixed
  positive integer. Define
  \[
  C_n = \sum_{i=-n}^{-1}  f(i)\,,
  \]
  and for integers $t\ge -n$ define
  \[
  F_n(t) = -C_n + \sum_{i=-n}^t  f(i)\,.
  \]

  Using Lemma \ref{LemmaSumQP} (and reindexing as necessary), we see
  that there is a quasi-polynomial $p_n(t)$ such that $F_n(t)=p_n(t)$
  for all integers $t \ge -n$. But then we see that, for nonnegative
  integers $t$,
  \[
  p_n(t) = -C_n +  \sum_{i=-n}^t  f(i) = \sum_{i=0}^t f(i) = F(t) =
  p(t)\,.
  \]
  Because $p_n(t)$ and $p(t)$ agree for all nonnegative $t$, they must
  be identical as quasi-polynomials, and in particular
  \[
  p(-n)=-C_n +  \sum_{i=-n}^{-n}  f(i) =
  -\sum_{i=-n+1}^{-1}  f(i)\,,
  \]
  as desired.
\end{proof}

\begin{proof}[Proof of Theorem \ref{reciprocity}]
  Again, we induct on the dimension $d$ of the polytope. The inductive
  step will consist of two parts. First, assume $\P$ is a
  $d$-dimensional polytope that is the convex hull of the origin and
  $\QQ$, where $\QQ$ is a $(d-1)$-dimensional polytope. We shall first
  prove reciprocity for these types of polytopes. Second, having
  reciprocity for pyramids, we use the explicit inclusion-exclusion
  formula for the indicator functions of the ``exterior point
  triangulation'' given by \eqref{indicators} to show that reciprocity
  holds in general.

  Let $\QQ$ be a $(d-1)$-dimensional polytope in $\R^d$ contained in
  the hyperplane $x_d = \frac{a}{b}$ for nonzero, relatively prime integers $a$ and $b$,
  and let $\P$ be the pyramid $\conv\{0, \QQ\}$. As before, define
  $\bar{\QQ}=\frac{b}{a}\QQ$, lying in the hyperplane $x_{d}=1$. Let
  $f(i)$ give the number of lattice points in $i\bar{\QQ}$ and
  $f^{\circ}(i)$ give the number of lattice points in
  $i\bar{\QQ}^\circ$. By induction, we can assume that $f^\circ(i) =
  (-1)^{d-1}f(-i)$. So
  \[
  F(t)=\sum_{i=0}^{\left\lfloor \frac{ta}{b} \right\rfloor}  f(i)
  \]
  is the number of lattice points in $\P$, and
  \[
  F^\circ(t) =  \sum_{i=1}^{\left\lceil \frac{ta}{b}
    \right\rceil - 1}  f^\circ(i)
  \]
  is the number of lattice points in $\P^\circ$. Let $p(t)$ be the
  quasi-polynomial which corresponds to $F(t)$. By Lemma
  \ref{NegativeLemma}, $p(t)$ agrees with $F(t)$ for both positive and
  negative integers. Let $t' = \left\lceil \frac{ta}{b}
  \right\rceil$. Then
\begin{align*}
  (-1)^d p(-t) &= (-1)^d F(-t) \\
  &=(-1)^d \sum_{i=0}^{\floor{\frac{-ta}{b}}}
  f(i) \\
  &=(-1)^d \sum_{i=0}^{-t'} f(i) \\
  &=(-1)^{d+1} \sum_{i=-t' + 1}^{-1} \left[
    (-1)^{d-1}f^\circ(-i) \right] \\
  &=\sum_{i=1}^{t'-1} f^\circ(i)
  =F^\circ(t)\,.
\end{align*}

We now consider the case for general rational polytopes $\P$. As in
part 2 of the proof of Ehrhart's theorem, we write $\P$ as a sum and
difference of polytopes of the form $\conv\{0, \QQ\}$ and lower
dimensional polytopes. By equation~\eqref{indicators} and the
inductive hypothesis,
\begin{align*}
  L_\P(-t) &= \sum_{F\in \mathcal{F}_h}(-1)^{d-1-\dim(F)}L_{\P_{F}}(-t)
  -\sum_{F\in \mathcal{F}_v}(-1)^{d-1-\dim(F)} \big(L_{\P_{F}}(-t) -
  L_F(-t)\big)\\
  &= (-1)^d\left[\sum_{F\in \mathcal{F}_h} L_{\P_F^{\circ}}(t) - \sum_{F\in
    \mathcal{F}_v} \big(L_{\P_F^{\circ}}(t) - L_{F^{\circ}}(t)\big)\right]\,,
\end{align*}
and it is easy to see that the right hand side counts $(-1)^d$ times
the number of integer points in $t\P^{\circ}$, which finishes the
proof by induction.
\end{proof}

\section{Discussion.}

One might hope that this proof of Ehrhart's Theorem yields an efficient algorithm to compute Ehrhart polynomials inductively. To make this work, one must be able to efficiently compute a simple expression for sums like
\[\sum_{s=0}^{t}\floor{\frac{2s+3}{4}}\cdot \floor{\frac{3s+2}{5}}\]
(the summands are called \emph{step-polynomials} in \cite{steppolynomial}).  The only known way to compute such sums seems to be to first convert to a generating function using methods from \cite{steppolynomial} and then manipulate the generating function using the Barvinok Algorithm (which computes the generating function of the integer points in a polyhedron) and other related techniques, see \cite{BP99}. However, computing the sum in this way would be ill-advised, because the generating function techniques can compute the Ehrhart polynomial directly. Put another way, an elementary algorithm that, given a summation of a step-polynomial computes the sum as a new step-polynomial, would be interesting, because it would provide an alternative algorithm to Barvinok for answering questions about integer points in polytopes.

A second insight that this proof of Ehrhart's Theorem provides is the importance of picking nice bases in which to write Ehrhart polynomials and quasi-polynomials. Perhaps, rather than the standard basis for polynomials, $t^{d}$ for $d\ge 0$, a basis such as
\[L_{\Delta_{d}}(t)=\frac{(t+1)(t+2)\cdots(t+d)}{d!}\text{ for $d\ge 0$}\]
(which sums nicely) would be enlightening. A similar basis has already been studied: given $d$, $\binom{t+d-j}{d}$ for $0\le j\le d$ is a basis for polynomials of degree at most $d$. If we write
\[L_{\P}(t)=\sum_{j=0}^{d}h_{j}\binom{t+d-j}{d},\]
then the associated Hilbert series has the form
\[\sum_{s=0}^{\infty} L_{\P}(s)t^{s}=\frac{h_{0}+h_{1}t+\cdots+h_{d}t^{d}}{(1-t)^{d+1}}.\]
See, for example, Section 3.4 of \cite{continuous_discrete} for a discussion of this, including a proof of the fact that the $h_{j}$ are nonnegative. This basis has recently been used \cite{Bra} to study the roots of the Ehrhart polynomial, inspiring further study \cite{Pfe} of roots of polynomials whose coefficients are nonnegative in arbitrary bases. 

\section{Acknowledgements.}
The authors would like to thank Matthias Beck  and Timothy Chow for helpful discussions, and the anonymous referee for helping improve the exposition.


\bibliographystyle{amsplain}

\def\cprime{$'$} \def\cprime{$'$}
\providecommand{\bysame}{\leavevmode\hbox to3em{\hrulefill}\thinspace}
\providecommand{\MR}{\relax\ifhmode\unskip\space\fi MR }
\providecommand{\MRhref}[2]{%
  \href{http://www.ams.org/mathscinet-getitem?mr=#1}{#2} }
\providecommand{\href}[2]{#2}

\end{document}